\newtheorem{theorem}{Theorem}[section]
\newtheorem{lemma}[theorem]{Lemma}
\newtheorem{proposition}[theorem]{Proposition}
\newtheorem{conjecture}[theorem]{Conjecture}
\theoremstyle{definition}
\def\output{\mathscr{E}}
\newcommand{\C}{\mathcal{C}}
\begin{document}

  \title{Asymptotic bounds on total domination in regular graphs} \author{Carlos Hoppen}
  \address{Instituto de Matem\'{a}tica e Estat\'{i}stica\\ Universidade Federal do Rio Grande do Sul\\
    Porto Alegre, Brazil}\email{choppen@ufrgs.br}
  \author{Giovane Mansan}
    %\thanksref{coemail}
  \address{Instituto de Matem\'{a}tica e Estat\'{i}stica\\Universidade Federal do Rio Grande do Sul\\
    Porto Alegre, Brazil}  
\email{giovanemansan@gmail.com}

\thanks{A preliminary version of this paper appeared as an extended abstract in the Proceedings of the X Latin and American Algorithms, Graphs and Optimization Symposium (LAGOS'19)~\cite{gio3}.}

\begin{abstract}    
We obtain new upper bounds on the size of a minimum total dominating set for random regular graphs and for regular graphs with large girth. In particular, they imply that an upper bound conjectured by Thomass\'{e} and Yeo~\cite{Tom} holds asymptotically almost surely for 5-regular graphs and holds for all 5-regular graphs with sufficiently large girth. Our bounds are obtained through the analysis of a local algorithm using a method due to Hoppen and Wormald~\cite{wor}. 
\end{abstract}

\maketitle

\section{Introduction and Main Results}\label{intro}
This paper is about total dominating sets in graphs. As usual, a graph $G=(V,E)$ consists of a \emph{vertex set} $V$ and of an \emph{edge set} $E \subseteq \{\{u,v\} \colon u,v \in V, u \neq v\}$. Our notation and terminology is standard, we refer the reader to~\cite{alon}.  

There are many parameters related with the general notion of domination in graphs. The most studied is the \emph{domination number} $\gamma(G)$ of a graph $G=(V,E)$. A set $S \subseteq V$ is a \emph{dominating set} of $G$ if every vertex in $V \setminus S$ is adjacent to some vertex in $S$. The \emph{domination number} is the minimum size of a dominating set of $G$, that is,
$$\gamma(G)=\min\{|S| \colon S \textrm{ is a dominating set of }G\}.$$
Total domination is a related notion that has been introduced by Cockayne, Dawes and Hedetniemi~\cite{dawes}. A \emph{total dominating set} $S  \subseteq V$ is a set such that every vertex $v \in V$ is adjacent to a vertex in $S$, so that every vertex in a total dominating set must have a neighbor in this set. In particular, a total dominating set if and only if it does not have isolated vertices. Naturally, the \emph{total domination number} $\gamma_t(G)$ of $G$ is defined as 
\[\gamma_t(G)=\min\{|S| \colon S \textrm{ is a total dominating set of }G\}.\]
Note that every total dominating set is a dominating set, and that every dominating set $S$ may be turned into a total dominating set by adding a neighbor of each vertex in $S$ to the set, we have 
$\gamma(G) \leq \gamma_t(G) \leq 2 \gamma(G)$ for any graph $G$ with no isolated vertices. Both inequalities are tight, as illustrated by complete bipartite graphs and complete graphs, respectively.

Computing the value of $\gamma(G)$ appears on Karp's seminal list of NP-complete problems~\cite{karp}. Pfaff, Laskar and Hedetniemi~\cite{PLH83} proved that computing $\gamma_t(G)$ is also NP-complete. For results and references about domination and total domination, we refer to Haynes, Hedetniemi and Slater~\cite{HHS98} and to Henning and Yeo~\cite{hen}, respectively. 

A large number of upper and lower bounds have been proposed for the size of a minimum total dominating set in an $n$-vertex graph $G$. Since the addition of a vertex to a set may only dominate its neighbors, it is clear that $\gamma_t(G) \geq n/\Delta(G)$, where $\Delta(G)$ denotes the maximum degree of $G$. On the other hand, Henning and Yeo~\cite[Theorem 5.1]{hen} proved that $\gamma_t(G) \leq \left(\frac{1+\ln(\delta)}{\delta}\right)n$, where $\delta(G) \geq 1$ is the minimum degree of $G$. A natural setting for comparing upper and lower bounds of this type are \emph{$d$-regular} graphs, namely graphs where every vertex is incident with $d$ edges, so that $\delta(G)=\Delta(G)=d$ (we shall always assume that $nd$ is even). 

In general, the size of a minimum total dominating set may still vary considerably among $n$-vertex $d$-regular graphs. For instance, if $G$ is a collection of disjoint complete bipartite graphs $K_{d,d}$, we have $\gamma_t(G)=n/d$, as every component is totally dominated by one vertex of each side of the bipartition. This shows that the trivial lower bound on $\gamma_t(G)$ mentioned above is sharp for all $d$. On the other hand, if $G$ is a collection of disjoint complete graphs $K_{d+1}$, we have $\gamma_t(G)=2n/(d+1)$, which is substantially larger. Table~\ref{tabela} gives upper bounds $\Gamma_0(d)$ on the proportion of vertices in a minimum total dominating set in a $d$-regular graph $G$ for some values of $d$ (actually, these bounds have been obtained for $ \delta(G)\geq d$). As it turns out, the upper bounds in the table are sharp for $d$-regular graphs for $d \in \{2,3,4\}$. For $n$-vertex graphs $G$ with $\delta(G) \geq 5$, the best known upper bound $\gamma_t(G) \leq \frac{17}{44} n$ is due to Dorfling and Henning~\cite{Dorf}. Thomass\'{e} and Yeo~\cite{Tom} conjecture that the following improvement is possible.
\begin{conjecture}\label{conj_TY}
Every $n$-vertex graph $G$ with minimum degree $\delta(G) \geq 5$ satisfies
$$\displaystyle{\gamma_t(G) \leq \frac{4}{11} n.}$$
\end{conjecture}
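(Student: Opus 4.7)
The plan is to attack the conjecture in two stages: first reduce to the regular case and then handle regular graphs via a constructive algorithm combined with structural analysis. Since this conjecture is still open and the best published upper bound $\frac{17}{44}$ of Dorfling and Henning lies strictly above the conjectured tight constant $\frac{4}{11}$, the sketch below is necessarily speculative and highlights where the main difficulty lies.

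First, given $G$ with $\delta(G) \geq 5$, I would attempt to pass to a spanning subgraph $H$ with $\delta(H) = \Delta(H) = d$ for some $d \geq 5$. Because any total dominating set of $H$ is a total dominating set of $G$ on the same vertex set, $\gamma_t(G) \leq \gamma_t(H)$, so a bound on $\gamma_t(H)$ transfers. Spanning $d$-regular subgraphs need not exist in general, so this step probably requires generalising the target bound to the class of graphs with $\delta \geq 5$ and $\Delta$ bounded, or iterating over several regular classes via a factorisation argument.

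Second, for a $d$-regular graph with $d \geq 5$, I would apply a randomised local algorithm in the spirit of the one developed in this paper. The algorithm would scan vertices in a random order and decide, based on constant-radius neighbourhoods, whether to include each vertex in the total dominating set $S$. The Hoppen--Wormald differential equation method controls $\mathbb{E}|S|$ when local neighbourhoods are tree-like, which gives the $\frac{4}{11}$ bound for $d = 5$ in the large-girth regime (and should give at most this for $d > 5$ after the transition rules are re-tuned, since a larger degree can only ease total domination). To cover arbitrary girth, I would seek a discharging argument formalising the intuition that short cycles can only help total domination: assign each vertex an initial charge of $\frac{4}{11}$, run the algorithm, and redistribute charge so that the surplus released by each short cycle compensates for any local deviation from the tree estimate.

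The main obstacle will be this discharging step. The constant $\frac{4}{11}$ is conjecturally tight, attained by specific small configurations, so the local analysis must be exactly tight on the extremal graphs while simultaneously absorbing the losses produced by short cycles elsewhere. Designing one discharging scheme that balances all local configurations arising under $\delta \geq 5$ is precisely where previous attempts at the conjecture have stalled; an inductive alternative based on an unavoidable reducible configuration would face the same tightness issue, since at the density $\frac{4}{11}$ there is no room for slack. For these reasons I expect the overwhelming bulk of the effort, and the only truly novel idea, to live in that step.
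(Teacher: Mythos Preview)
The statement you are addressing is labelled and treated in the paper as a \emph{conjecture}, not a theorem; the paper does not prove it and does not claim to. What the paper actually establishes is the much weaker partial result that the bound $\tfrac{4}{11}n$ holds for $5$-regular graphs of sufficiently large girth (Theorem~\ref{thm_main} with the numerical value $0.3572<\tfrac{4}{11}$ from Table~\ref{tabela}) and asymptotically almost surely for random $5$-regular graphs (Theorem~\ref{teo:52}). You recognise this yourself when you call your sketch ``necessarily speculative'', so in that sense there is no discrepancy: neither you nor the paper supplies a proof of Conjecture~\ref{conj_TY}.

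That said, your outline contains a concrete gap beyond the discharging step you already flag. The reduction in your first stage does not go through: a graph with $\delta(G)\ge 5$ need not contain any spanning $d$-regular subgraph for any $d\ge 5$ (indeed it need not even contain a spanning $5$-regular subgraph; simple degree-sequence obstructions already prevent this), so the inequality $\gamma_t(G)\le\gamma_t(H)$ you wish to invoke is unavailable in general. The paper sidesteps this entirely by restricting attention to $d$-regular graphs from the outset and never attempting to cover the full minimum-degree hypothesis. Your second stage aligns with the paper's actual contribution in the large-girth regime, but the proposed extension to arbitrary girth via discharging is, as you say, precisely the open heart of the problem, and nothing in the paper offers a route around it.
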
 

To investigate the behavior of a graph-theoretical parameter on $d$-regular graphs avoiding the influence of particular substructures, two traditional ways are to consider its \emph{typical value}, namely its value for a randomly chosen $d$-regular graph, and to consider its value on graphs with \emph{large girth}, where the \emph{girth} of a graph is the length of a shortest cycle in the graph. Properties of random regular graphs have been intensively studied (see~\cite{Wormald} for a survey of results in this direction). The effect of the girth on the value of graph parameters has also been widely studied, two classical references involving the chromatic number are Erd\H{o}s~\cite{erd59} and Gr\"{o}tzsch~\cite{grot}.   

Regarding the effect of large girth on the total domination number of a $d$-regular graph $G$, Henning and Yeo~\cite{henn} showed that, if $G$ is an $n$-vertex graph with $\delta(G)\geq 2$ and  girth $g \geq 3$, then  $\gamma_t(G)\leq \left(\frac{1}{2}+\frac{1}{g}\right)n.$
In particular, this shows that the trivial lower bound is asymptotically optimal for $2$-regular graphs as $g \rightarrow \infty$. (This fact can also be proved directly by looking at minimum total dominating sets of long cycles.) We study this parameter for general $d$. Precisely, for $d\geq 2$ and $g_0\geq 3$, let
\begin{equation}\label{def_par_girth}
\gamma_t^g(d,g_0)=\sup\{ \gamma_t(G)/|V(G)| \colon \textrm{$G$ is $d$-regular with girth $g\geq g_0$} \},
\end{equation}
that is, $\gamma_t^g(d,g_0)$ is the smallest possible upper bound on $d$-regular graphs with girth at least $g_0$. This produces a monotone non-increasing sequence as $g_0$ increases, and we consider the parameter
\begin{equation}\label{def_par_lg}
\gamma_t^g(d,\infty)=\lim_{g_0\rightarrow\infty}\gamma_t^g(d,g_0).
\end{equation}
The result of Henning and Yeo implies that $\gamma_t^g(2,\infty)=1/2$.

The following is the main result of this paper.
\begin{theorem} \label{thm_main}
For any $d \geq 3$ and $\delta>0$, there exists $g_0$ such that any $d$-regular graph $G$ with girth $g \geq g_0$ satisfies
$$ \gamma_t(G)/|V(G)| \leq q(x^*)+\delta,$$
where $z_0(x)$ and $q(x)$ are solutions to the initial value problem (\ref{eq:53}) and $x^*=\inf\{ x>0:z_0(x)=0\}$.
\end{theorem}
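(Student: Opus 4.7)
The plan is to construct a local, randomized greedy algorithm that produces a total dominating set whose size can be analyzed via the differential equation method of Hoppen and Wormald. Since we only need to bound $\gamma_t(G)/|V(G)|$, it suffices to exhibit \emph{some} algorithm that outputs a valid total dominating set of size at most $(q(x^*)+\delta)n$ with positive probability on any $d$-regular $G$ of sufficiently large girth.

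First, I would define the algorithm. At each step the algorithm maintains a partition of $V(G)$ into types that record the local status of each vertex: for instance, ``selected'' (already placed in the set under construction $S$), ``dominated'' (has a neighbor in $S$), ``needy'' (in $S$ but still without a neighbor in $S$), and ``free'' (none of the above). In one round, the algorithm picks a free vertex $v$ (uniformly at random), adds $v$ to $S$, updates the types of vertices within some bounded radius, and possibly forces one additional neighbor of any remaining needy vertex into $S$ so that the invariant ``every selected vertex eventually has a selected neighbor'' is maintained. The process stops when no free vertex remains; at that moment every vertex has a neighbor in $S$, so $S$ is a total dominating set. Crucially, the rules depend only on the isomorphism type of a constant-radius neighborhood, which is what makes the algorithm \emph{local} in the Hoppen--Wormald sense.

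Next, I would set up and analyze the associated system of differential equations. For a rescaled time variable $x = t/n$, let $z_i(x)$ denote the (expected) proportion of vertices of type $i$ and let $q(x)$ be the proportion of vertices in $S$ after $xn$ steps. Since the girth is large, the radius-$r$ neighborhood of a random free vertex is, with high probability, a $d$-regular tree, and one can compute the expected one-step change in each $z_i$ as a smooth function of the current state vector. Standard manipulations give an initial value problem of the form described in (\ref{eq:53}), with $z_0$ playing the role of the proportion of free vertices and $q$ the proportion selected. The algorithm terminates when $z_0$ first hits $0$, which by definition happens at $x=x^*$.

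Finally, I would invoke the Hoppen--Wormald concentration framework \cite{wor} to turn this heuristic analysis into a rigorous bound. Given $\delta>0$, one chooses a stopping time $x_1$ slightly before $x^*$ so that $z_0(x_1)$ is small, runs the local algorithm up to step $x_1 n$, and appeals to the main theorem of \cite{wor}: for every $\varepsilon>0$ there is $g_0$ such that on any $d$-regular graph with girth $\geq g_0$, the random trajectory of each type density stays within $\varepsilon$ of the deterministic solution throughout $[0,x_1]$ with positive probability. Hence at time $x_1n$ the set $S$ has size at most $(q(x_1)+\varepsilon)n$ and only $O(\varepsilon n)$ free vertices remain; dominating these leftover vertices trivially (e.g., adding both endpoints of an edge incident to each remnant) contributes at most $O(\varepsilon n)$ extra vertices. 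Choosing $\varepsilon$ and $x_1$ so that the total error is below $\delta$, and using continuity of $q$ at $x^*$, yields a total dominating set of size at most $(q(x^*)+\delta)n$, proving the theorem.

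The main obstacle is the concentration step: one must verify the Lipschitz and boundedness hypotheses of the Hoppen--Wormald theorem for the specific transition functions arising from our update rule, and handle the ``endgame'' when $z_0$ is close to $0$ (where the differential equations may become singular). Designing the algorithm so that the end-of-process cleanup costs only $o(n)$ vertices, and so that (\ref{eq:53}) has a well-defined finite $x^*$ with $q(x^*)$ matching the desired bound, is the delicate part of the argument.
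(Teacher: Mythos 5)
Your overall architecture --- a local algorithm that builds a total dominating set, a differential-equation analysis of the type densities, an appeal to the Hoppen--Wormald framework \cite{wor} to pass to $d$-regular graphs of large girth, and an $O(\varepsilon n)$ cleanup of the leftover undominated vertices --- matches the paper's skeleton: the paper proves the random-regular-graph statement (Theorem~\ref{teo:52}) via the configuration model and Wormald's differential equation method, checks that Algorithm~\ref{algo:totall} is a local deletion algorithm, and then invokes \cite[Theorem 7.1]{wor} to transfer the bound to large girth.

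The genuine gap is in the middle step. The bound in Theorem~\ref{thm_main} is $q(x^*)$ for the \emph{specific} initial value problem \eqref{eq:53}, whose right-hand sides $f_j$ (Appendix~\ref{Ap1}) encode the precise case analysis of Algorithm~\ref{algo:totall}: add only the already-dominated neighbor $u_t$ of the undominated vertex $v_t$ when possible, and when two vertices must be added, prefer ones that dominate further undominated vertices (the cases in lines 10, 13 and 16). The algorithm you sketch --- pick a free vertex, put it itself into $S$, and force a neighbor of each ``needy'' vertex into $S$ --- is a different and cruder heuristic, essentially the simple one analyzed in \cite{gio3}, which the paper explicitly states is not strong enough (for example, it does not verify Conjecture~\ref{conj_TY} for $d=5$); note also that placing the free vertex $v$ itself in $S$ does not dominate $v$, so your bookkeeping is already off. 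Its type dynamics would satisfy a \emph{different} ODE system, so the assertion that ``standard manipulations give an initial value problem of the form described in \eqref{eq:53}'' is unjustified and, as written, false: to prove the stated theorem you must derive the expected one-step changes of exactly the algorithm that produces the $f_j$, which is the content of equations \eqref{effect1}--\eqref{effect4}. Separately, you flag but do not resolve the endgame: one needs the qualitative analysis of the solution (Proposition~\ref{prop_begin} and Theorem~\ref{teo:supz}), namely that $0\le z_j\le 1$, $x^*\in(0,1]$ and $z_0(x^*)=0$, to guarantee that stopping when $z_0$ drops below $\varepsilon$ happens at some $\hat{x}\le x^*$ with $q(\hat{x})\le q(x^*)$ and leaves only $O(\varepsilon n)$ undominated vertices; without this, the ``continuity of $q$ at $x^*$'' step in your final paragraph has nothing to rest on.
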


The system of differential equations mentioned in the statement of the theorem arises naturally as we analyse the algorithm described in Section~\ref{sec:2}. We were not able to solve the initial value problem (\ref{eq:53}) analytically, and Table~\ref{tabela} provides numerical upper bounds $\Gamma^g_d$ (where the fourth decimal place has been rounded up) on the value of $q(x^*)$ in Theorem~\ref{thm_main} for a few values of $d$.
\begin{table}
\begin{center}
\begin{tabular}{|c|c|c|c|}
\hline
  $d$ &   $\Gamma_0(d)$ & Source &  $\Gamma^g_d$\\
\hline
2 &     $2/3$  & See footnote\footnote{This bound may be improved to $4/7$ if the components of $G$ are not in a particular family of six small graphs \cite{henning00,sun}.} &  $1/2$~\cite{henn} \\
\hline
3 &     $1/2$  & \cite{Arch} (2004)   & 0.4762  \\
\hline
4 &    $3/7\simeq 0.4285$  & \cite{Tom} (2007) & 0.4055  \\
\hline
5 &     $17/44\simeq 0.3863$& \cite{Dorf} (2015)  & 0.3572 \\
\hline
8 &   $0.3849$   &  \cite[Theorem 5.1]{hen} & 0.2703  \\
\hline
\end{tabular}
\end{center}
\caption{Deterministic upper bounds $\Gamma_0$ on the size of a minimum total dominating set in a $d$-regular graph $G$, the corresponding references and numerical approximations of the upper bound of Theorem~\ref{thm_main}.}
\label{tabela}
\end{table}
In particular, since $0.3572 < \frac{4}{11} \simeq 0.3636$, this shows that Conjecture~\ref{conj_TY} must hold for all $5$-regular graphs with sufficiently large girth.

Next consider the typical value of the total domination number on a large $d$-regular graph. To this end, let $\mathbb{G}_{n,d}$ be the set of (labelled) $n$-vertex $d$-regular graphs and, for an integer $d \geq 2$ and a constant $\varepsilon>0$, consider
\begin{equation}
\gamma_t^R(d,\varepsilon) = \inf_{\substack{\mathcal{A}\subseteq \mathbb{G}_{n,d},n\in\mathbb{N}, \\ |\mathcal{A}|\geq (1-\varepsilon)|\mathbb{G}_{n,d}|}} \sup \left\{ \frac{\gamma(G)}{n} \colon G\in \mathcal{A} \right\}.
\end{equation}
Note that, for fixed $d$, $\gamma_t^R(d,\varepsilon)$ is bounded and increases as $\varepsilon$ decreases, so that the following limit is well-defined:
\begin{equation}
\gamma_t^R(d)= \lim_{\varepsilon\rightarrow 0^+}\gamma_t^R(d,\varepsilon).
\end{equation}
Let $\mathcal{G}_{n,d}$ denote the probability space with sample space $\mathbb{G}_{n,d}$ and uniform probability distribution. In the language of probability, finding an upper bound $\Gamma^r_d$ on $\gamma_t^R(d)$ means that a random $d$-regular graph \emph{asymptotically almost surely} (a.a.s.) has a minimum total dominating set of size at most $\Gamma^r_d$. 

A well-known construction, which uses the fact that random $d$-regular graphs a.a.s.\ have a small number of cycles of bounded length~\cite{bolob,wor2}, allows one to prove that 
\begin{equation}\label{eq_lg_rrg}
\gamma_t^R(d) \leq \gamma_t^g(d,\infty), 
\end{equation}
so that any deterministic upper bound on the total domination number of $d$-regular graphs with large girth gives us an upper bound on the total domination number of a typical $d$-regular graph. In fact, the connection between the behavior of graph parameters for graphs with large girth and for random regular graphs given in~\eqref{eq_lg_rrg} in the context of total domination actually holds for many different parameters, and it is a significant open question whether inequalities such as~\eqref{eq_lg_rrg} hold with equality (see Backhausz and Szegedy~\cite{BS2018} for a detailed description of problems in this line of research). 

Wormald and the first author~\cite{wor} proved that an upper bound on $\gamma_t^R(d)$ also implies an upper bound on $\gamma_t^g(d,\infty)$ provided that it is obtained through the analysis of a \emph{local algorithm}, as described in their paper. (Again, the previous sentence would hold for a host of parameters other than total domination.) This result by Hoppen and Wormald (\cite[Theorem 7.1]{wor}) plays a fundamental role in this paper, as it allows us to derive Theorem~\ref{thm_main} from the proof a theorem about random regular graphs.
\begin{theorem} \label{teo:52}
For any $d \geq 3$ and $\delta>0$, a random graph $G \in \mathcal{G}_{n,d}$ asymptotically almost surely contains a total dominating set $D_T\subseteq V(G)$ such that
\[|D_T|\leq n\left(q(x^*)+\delta\right),\]
where $z_0(x)$ and $q(x)$ are solutions to the initial value problem (\ref{eq:53}) and $x^*=\inf\{ x>0:z_0(x)=0\}$.
\end{theorem}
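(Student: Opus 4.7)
The plan is to analyze the local algorithm that will be presented in Section~\ref{sec:2}, executed on a random $d$-regular graph exposed through the pairing (configuration) model of Bollob\'{a}s, and to track the evolution of appropriate random variables using the differential equation method of Wormald. The key probabilistic ingredient is the principle of deferred decisions: at each step, the only half-edges whose partners are exposed are those incident to vertices that the algorithm is currently processing, so that conditional on the current state the unrevealed part of the pairing remains uniformly random. This is what allows one-step conditional expectations to be computed in closed form as rational functions of the current counts.

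The next step is to identify the state variables. I would keep, for each residual degree $i \in \{0,1,\ldots,d\}$, the number $Z_i(t)$ of vertices with exactly $i$ unpaired half-edges at time $t$, together with the number $Q(t)$ of vertices that have so far been placed in $D_T$, possibly refined by a status flag recording whether a vertex has already acquired a neighbor inside $D_T$ (which is what separates total domination from ordinary domination). For each such variable I would compute $\mathbb{E}[X(t+1)-X(t)\mid\mathcal{F}_t]$ as a function of the $Z_i(t)$, divide by $n$ and substitute $t=xn$; the resulting expressions are the right-hand sides of the system (\ref{eq:53}), with $z_i(x) = Z_i(\lfloor xn\rfloor)/n$ and $q(x) = Q(\lfloor xn\rfloor)/n$ as the scaling limits. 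Wormald's theorem, applied under the standard boundedness, Lipschitz and trend-hypothesis conditions, then gives that a.a.s.\ each $Z_i(t)/n$ and $Q(t)/n$ stays within $o(1)$ of the ODE solution throughout the process.

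It remains to interpret the stopping condition. The algorithm must be designed so that it halts precisely when no vertex of residual degree at least $1$ remains available for processing, which translates into the condition $z_0(x)=0$; thus the natural stopping point is $x^{*}=\inf\{x>0:z_0(x)=0\}$, and at that point the produced set has size $Q(x^{*} n)=q(x^{*})n+o(n)$. One then verifies directly from the rules of the algorithm that $D_T$ is indeed a total dominating set: every vertex not in $D_T$ has been dominated at the moment its last half-edge was exhausted, and every vertex placed in $D_T$ was paired into $D_T$ through an explicit partner selection step. Combining the concentration result with this structural observation gives $|D_T|\leq (q(x^{*})+\delta)n$ a.a.s., as claimed.

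The principal obstacle I expect is twofold. First, the state space must be rich enough for the process to be Markovian in the configuration model but small enough for the ODE system to be tractable; in particular, because \emph{total} domination requires each vertex of $D_T$ to itself be dominated by $D_T$, one needs extra bookkeeping distinguishing ``already totally dominated'' from ``merely dominated'' vertices, and the algorithm must prioritize operations so as to keep the closure rule consistent. Second, one has to verify uniform Lipschitz continuity of the right-hand sides of (\ref{eq:53}) on the relevant domain and to show that $x^{*}$ is finite and attained before any blow-up, which is where the concrete choice of priorities in the algorithm enters in an essential way; this is also where the numerical evaluations of $q(x^{*})$ reported in Table~\ref{tabela} come from.
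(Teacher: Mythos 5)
Your overall strategy coincides with the paper's: run the local algorithm on the pairing model, use the independence property to compute one-step conditional expectations of the degree counts and of $|D_t|$, and invoke Wormald's differential equation method to get (\ref{conc_nick}). The genuine gap is in the endgame. You assert that the process can be followed until $z_0(x)=0$ and that at that moment $|D_T|=q(x^*)n+o(n)$. But Wormald's theorem only gives concentration while the trajectory stays inside a domain on which the drift functions are Lipschitz, and the functions $f_j$ here are rational with denominators that are powers of $s_0(x)=\sum_i (d-i)z_i(x)$; the Lipschitz domain must therefore keep $z_0$ (hence $s_0$) bounded below by some $\varepsilon>0$, and the analysis cannot be pushed to the singular point $x^*$ itself. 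This forces two additional steps that your proposal does not supply: (1) the algorithm is stopped once fewer than $\varepsilon n$ undominated vertices remain, and these are dominated by adding at most $\varepsilon n$ further vertices, which together with the monotonicity $q(\hat x)\le q(x^*)$ is exactly how the $\delta$ in the statement is spent (it is not merely the $o(n)$ error of the method); and (2) one must prove that the ODE solution in fact exits the domain because $z_0\to 0$, and not because some $z_j$ becomes negative or the solution leaves the box earlier. The paper needs Proposition~\ref{prop_begin} and Theorem~\ref{teo:supz} for this, showing $z_j>0$ just after $0$, that $\sum_j z_j$ has derivative at most $-1$ (so $x^*\le 1$), and that $z_j(x^*)=0$ with $z_{j-1}(x^*)>0$ would force $z_j'(x^*)>0$, a contradiction. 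You flag ``no blow-up before $x^*$'' as an anticipated obstacle but give no argument; this is a required part of the proof, not a routine verification.

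A second, smaller mismatch concerns the state space. You suggest that total domination requires an extra status flag separating ``dominated'' from ``totally dominated'' vertices. The statement you are proving refers to the specific IVP (\ref{eq:53}), whose variables are exactly $z_0,\dots,z_{d-1},q$; enlarging the state space would yield a different system and not the claimed bound $q(x^*)$. The paper avoids any flag because the residual degree alone encodes domination status (degree-$d$ vertices of the survival graph are precisely the undominated ones, an invariant preserved by the deletion rules), and the total-domination property of $D$ itself is guaranteed structurally by the case analysis of Algorithm~\ref{algo:totall}: a vertex added alone already has a neighbor in $D$, and vertices added in pairs are adjacent to each other or already dominated. Relatedly, your halting rule (``no vertex of residual degree at least $1$ remains'') is not the algorithm's rule, which stops when the count of undominated vertices drops below $\varepsilon n$; getting this right is what ties the analysis to the domain issue above.
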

The proof of Theorem~\ref{teo:52} uses a powerful method due to Wormald~\cite{wormald1}, known as the differential equation method. It analyses the performance of a specific local algorithm that produces a total dominating set in an input graph $G$ when this algorithm is applied to a random regular graph $G \in \mathcal{G}_{n,d}$. The differential equation method is a concentration-type result that has been very successful in the analysis of random processes. In the particular case of random regular graphs, it has already been used to study parameters related with domination, see~\cite{duc,worth}, and results for graphs with large girth using the general approach described above have also been proved in~\cite{HW}. 

We should also mention that the ability of local algorithms to approximate the value of graph parameters for graphs with large girth has attracted a lot of attention. Gamarnik and Sudan~\cite{gam} showed that, for sufficiently large $d$, local algorithms cannot approximate the size of the largest independent set in a $d$-regular graph of large girth with an arbitrarily small multiplicative error. The approximation gap was improved by Rahman and Vir\'{a}g~\cite{RV}. Very recently, the same phenomenon was observed for max-cut problems~\cite{chen}. However, to the best of our knowledge, there are no results for domination parameters or for small values of $d$.

The remainder of the paper is structured as follows. In Section~\ref{sec:2}, we present our algorithm, while in Section~\ref{sec:2_1} we describe the setting in which the analysis is carried out. Section~\ref{sec:3} contains the proof of our main result.

\section{A heuristic to produce small total dominating sets}\label{sec:2}

Given a $d$-regular graph $G$, we may easily devise heuristics to produce small total dominating sets. For instance, start with the graph $G_0=G$ and a set $D_{0}=\emptyset$, which will be the total dominating set at the end of the heuristic. The construction proceeds by rounds that are labeled by a discrete parameter $t$. For each $t$, we produce $G_{t+1}$ and $D_{t+1}$ from $G_t$ and $D_t$, respectively, according to the following rules:
\begin{enumerate}
\item[$(1)$] Choose a vertex $v_t$ u.a.r.\ among all vertices of degree $d$ in $G_t$ and choose a vertex $u_t$ u.a.r.\ among the neighbors of $v_t$.

\item[$(2)$] If the degree of $u_t$ in $G_t$ is $d$, then delete $u_t$ and $v_t$ to produce $G_{t+1}$ and define $D_{t+1}=D_t \cup \{ v_t,u_t \}$.

\item[$(3)$] If the degree of $u_t$ in $G_t$ is not $d$, then delete $u_t$ to produce $G_{t+1}$ and define $D_{t+1}=D_t \cup \{ u_t \}$.
\end{enumerate}
Note that the set of vertices of degree $d$ in $G_t$ is precisely the set of vertices of $G_t$ that are not dominated by vertices in $D_t$. Moreover, all vertices added to $D_t$ to produce $D_{t+1}$ are dominated by a vertex that is already in $D_t$ or that is added to $D_{t+1}$. As a consequence, if this sequence of steps were performed until $G_t$ did not contain any vertices of degree $d$, then the set $D_t$ would be a total dominating set of $G$. This simple heuristic has been analysed in~\cite{gio3}. Although the asymptotic upper bounds provided are better than the deterministic upper bounds that hold for all $d$-regular graphs, they are not good enough, for instance, to prove that the upper bound  in Conjecture~\ref{conj_TY} holds for all $5$-regular graphs with sufficiently large girth (and that it holds a.a.s.\ for $5$-regular graphs). To obtain better results, we devise a heuristic that takes some additional information into account.

\begin{algorithm}[H]
\linespread{1.2}\selectfont
%\SetAlgoLined
\SetAlgoRefName{$1(l)$}
\KwIn{An $n$-vertex $d$-regular graph $G$.}
\KwOut{A total dominating set $D$ of $G$.}
Set $t=0$, $G_0=G$ and $D_0=\emptyset$\;
\While{the number of vertices of degree $d$ in $G_t$ is at least $\epsilon n$}{
Choose a vertex $v_t$ u.a.r.\ among all vertices of degree $d$ in $G_t$\;
Choose a neighbor $u_t$ of $v_t$ u.a.r.\ \;
\uIf{$\deg_{G_t}(u_t)\neq d$}{$D_{t+1}=D_t \cup \{u_t\}$, $G_{t+1}=G_t-\{u_t\}$ and $t \leftarrow t+1$\;}\Else{
\uIf{$u_t$ has a neighbor of degree $d$ in $G_t$ other than $v_t$}{$D_{t+1}=D_t \cup \{u_t,v_t\}$, $G_{t+1}=G_t-\{u_t,v_t\}$ and $t \leftarrow t+1$\;}
\Else{\uIf{$v_t$ has a neighbor $w_t$ of degree $d$ other than $u_t$}{$D_{t+1}=D_t \cup \{w_t,v_t\}$, $G_{t+1}=G_t-\{w_t,v_t,u_t\}$ and $t \leftarrow t+1$;}\Else{choose u.a.r. a neighbor $v_t'$ of $v_t$ and a neighbor $u_t'$ of $u_t$\;
$D_{t+1}=D_t \cup \{u'_t,v'_t\}$, $G_{t+1}=G_t-\{u_t,u'_t,v_t,v'_t\}$ and $t \leftarrow t+1$\;}}}
}
Add a neighbor of each vertex of degree $d$ in $G_t$ to $D_t$ to produce $D$;
\caption{with parameters $n\geq 4$, $d\geq 3$ and $\epsilon>0$.}
\label{algo:totall}
\end{algorithm}

As in the previous heuristic, vertices with degree $d$ in $G_t$ are precisely the ones that have not been dominated up to round $t$. The difference here is that, when we add two vertices to the dominating set in a single round, we make more effort to choose vertices of degree $d$ that also dominate other vertices. This is a local improvement that will lead to better results. 

The methods that we use do not allow us to analyse such an algorithm after the first round $T$ such that the number of vertices of degree $d$ in $G_T$ falls below $\varepsilon n$, where $n=|V(G)|$ and $\varepsilon>0$ is a small constant. However, these vertices may be easily dominated with the addition of at most $\varepsilon n$ vertices to $D_T$. 

Before discussing how this algorithm may be analysed, we observe that it is a local deletion algorithm in the sense of~\cite{wor}, which allows us to derive Theorem~\ref{thm_main} from Theorem~\ref{teo:52} by applying  \cite[Theorem 7.1]{wor}. Starting with an input graph $G_0=G$ with colored vertices, a \emph{local deletion algorithm}  is an iterative algorithm which, at each round $t \geq 1$, randomly selects some vertices in the \emph{survival graph} $G_{t}$ according to a certain kind of rule and explores the neighborhoods of the selected vertices to create a new survival graph $G_{t+1}$, which is obtained by possibly recoloring or deleting some of the vertices of $G_t$. Let $D$ be a positive integer, called the \emph{depth} of the algorithm. The \emph{type} of a vertex $v$ in a colored graph is a pair given by its color and its degree. Assume that there are two sets of colors $\C$ and $\output$, which denote respectively the set of \emph{transient colors}, which are assigned to the vertices in the survival graph, and the set of \emph{output colors}, which are assigned to the vertices that are deleted from the survival graph. 

Initially, all vertices have the same transient color, which is called {\em neutral}. At each round $t \geq 1$, the algorithm produces a survival graph $G_{t+1}$ according to the following rules. First, the algorithm produces a subset $S_t \subset V(G_{t})$. For each $v \in S_t$, the algorithm \emph{explores} some vertices within distance $D$ of $v$ in $G_{t}$ according to some rules that will be specified below. Exploring a vertex means checking the type of some of its neighbors and possibly adding them to a \emph{query graph}, a colored graph that stores this information and allows one to explore further. Once a final query graph is obtained, there is a \emph{recoloring step} in which some vertices of $G_{t}$ are assigned new colors. Those that receive output colors are deleted from $G_{t}$ to produce the new survival graph $G_{t+1}$. Algorithm~\ref{algo:totall} may be viewed as having a single transient color, the neutral color, for all vertices in the survival graph and two output colors, one for vertices that have been added to $D$ and another for the other deleted vertices (lines 12 and 15 of the algorithm). In particular, the type of a vertex in the survival graph is simply given by its degree. This is known as a \emph{native local deletion algorithm} in~\cite{wor}.

In local deletion algorithms, selecting vertices at each round $t$ uses a \emph{selection rule}, which is a randomised function that, for a nonempty colored graph, produces a subset $S$ of $V$. It must have the property that any vertex $v$ lies in $S$ with a probability that is determined by its type. In Algorithm~\ref{algo:totall}, a single vertex $v_t$ is selected at each round $t$, always with degree $d$, and all vertices of degree $d$ are equally likely, so that this is satisfied. A local deletion algorithm is allowed to explore the neighborhood of each vertex $v_t$ that is selected at round $t$ up to distance $D$. This is also an iterative procedure, starting from the singleton $v_t$ and producing a query graph $Q(v_t)$. Every time a vertex is added to $Q(v_t)$, we may \emph{query} it, that is, ask about the type of some of its neighbours. Decisions about the type of next vertex to query or to add to $Q(v_t)$ (or about ending exploration) may be based on previous queries and may involve randomisation. However, if several vertices of the same type are candidates to be chosen in the same step, they must be chosen with the same probability \footnote{Actually, the description of this step in~\cite{wor} is more precise, but the current description is sufficient for the purposes of this paper.}. In Algorithm~\ref{algo:totall}, the query graph always includes $v_t$ and $u_t$, but may also include $w_t$, $v'_t$ and $u'_t$ depending on the outcome of queries. Exploration may go up to distance 2 of $v_t$ (in case $u_t'$ is chosen). Recall that, whenever the algorithm calls to select a neighbor with a given degree, it is chosen u.a.r.\ amongst all vertices with that degree, so that this condition is satisfied. 

The final ingredient for defining a local deletion algorithm is a \emph{recoloring rule}, which defines how the algorithm uses the information given in the exploration step to update the survival graph. All vertices in the query graph that are deleted from the survival graph must be assigned output colors, while the remaining vertices in the query graphs and their neighbors in $G_t$ may keep their color or be recolored with another transient color \footnote{In~\cite{wor} there are rules for recoloring, but they are not relevant in the context of a native local deletion algorithm where a single vertex is selected at each round.}. In Algorithm~\ref{algo:totall}, all vertices in the survival graph have a single transient color, so that recoloring is trivial and clearly satisfies these requirements. 

This allows us to conclude that Algorithm~\ref{algo:totall} is a local deletion algorithm, so that \cite[Theorem 7.1]{wor} allows us to derive Theorem~\ref{thm_main} directly for Theorem~\ref{teo:52}.

\section{Random regular graphs and the Differential Equation Method}\label{sec:2_1}

The previous section was devoted to introducing the heuristic that will be analysed in this paper and to showing, based on work in~\cite{wor}, that it suffices to analyse the performance of this heuristic in random regular graphs. This will be done using the well-known differential equation method, which is the subject of this section. As in many applications of this method to random regular graphs, instead of working directly with regular graphs, we use the approach of Bollob{\'a}s~\cite{bolob}, known as the \emph{configuration model}, which considers the probability space whose elements may be generated by the following simple randomized procedure. Start with $nd$ points in $n$ buckets labelled $1,\ldots,n$, with $d$ points in each bucket, and choose uniformly at random (u.a.r.) a \emph{pairing} $P=a_1,\ldots,a_{dn/2}$ of the points such that each $a_i$ is an unordered pair of points, and each point is in precisely one pair $a_i$. As usual $\mathcal{P}_{n,d}$ denotes the probability space of such pairings. By collapsing each bucket into a single vertex, we see that each pairing corresponds to a $d$-regular pseudograph  (loops and multiple edges permitted) with vertex set $\{1,\ldots,n\}$ and with an edge $\{i,j\}$ for each pair with points in buckets $i$ and $j$. A straightforward calculation shows that any two simple $d$-regular graphs (i.e.\  with no loops or multiple edges) on $n$ vertices are produced with the same probability. For fixed $d$, a crucial property is that the probability that a random pairing produces a $d$-regular graph tends to the positive constant $e^{(1-d^2)/4}$ as $n$ tends to infinity (Bender and Canfield~\cite{bender}), and so results that hold a.a.s.\ for random pairings in $\mathcal{P}_{n,d}$ must also hold a.a.s.\ for random $d$-regular graphs. 

When generating a random pairing, we may choose the pairs sequentially: the first point in a pair can be selected using any rule, as long as the second is chosen u.a.r.\ from the remaining points. We call this \emph{exposing} the pair, and this property is the \emph{independence property} of the model. The idea is to recast Algorithm~\ref{algo:totall} as if the input graph were a random regular graph that is generated while the algorithm is applied. To this end, we shall start with $P_0$, a collection of $n$ buckets with $d$ unpaired points in each bucket and with a set $D_{0}=\emptyset$. At each round $t \geq 1$, the algorithm extends a \emph{partial pairing} $P_{t+1}$ by exposing some pairs in $P_t$. It also adds vertices to $D_t$ to produce $D_{t+1}$. The \emph{degree} of a vertex (bucket) $v$ of $P_t$ is the number of unpaired points in $v$. 

\vspace{0.5cm}

\begin{algorithm}[H]
\linespread{1.2}\selectfont
%\SetAlgoLined
\SetAlgoRefName{$1(c)$}
\KwIn{The parameters are the input.}
\KwOut{An $n$-vertex $d$-regular pseudograph $G$ and a total dominating set $D$ of $G$.}
$t=0$, $D_0=\emptyset$, $P_0 \leftarrow$ collection of $n$ buckets with $d$ points in each\;
\While{the number of vertices of degree $0$ in $P_t$ is at least $\epsilon n$}{
Choose a vertex $v_t$ u.a.r.\ among all vertices of degree $0$ in $P_t$\;
Expose a pair with a point in $v_t$. Let $u_t$ be the other vertex in the pair\;
Expose pairs for all remaining unpaired points of $u_t$\;
\uIf{$\deg_{P_t}(u_t)\neq 0$}{define $D_{t+1}=D_t \cup \{ u_t \}$\;}\Else{Expose pairs for all remaining unpaired points of $v_t$\;

\uIf{$u_t$ has a neighbor of degree 0 in $P_t$ other than $v_t$}{define $D_{t+1}=D_t \cup \{ v_t,u_t \}$\;}\Else{\uIf{$v_t$ has a neighbor $w_t$ of degree 0 in $P_t$ other than $u_t$}{expose pairs for all remaining unpaired points of $w_t$\;

define $D_{t+1}=D_t \cup \{ v_t,w_t\}$\;}\Else{choose u.a.r.\ a neighbor of $v_t'$ of $v_t$\;

expose pairs for all remaining unpaired points of $v_t'$\;

choose u.a.r.\ a neighbor $u_t'$ of $u_t$\;

expose pairs for all remaining unpaired points of $u_t'$\;

define $D_{t+1}=D_t \cup \{ v_t',u_t' \}$\;}}}
$P_{t+1}$ is the partial pairing obtained by exposing the pairs in $P_t$\; $t \leftarrow t+1$\;}
Produce $D$ from $D_t$ and $P$ from $P_t$ by exposing all of the remaining pairs in $P_t$ and adding to $D$ one neighbor of each vertex of degree 0 in $P_t$\;
\caption{with parameters $n\geq 4$, $d\geq 3$ and $\epsilon>0$.}
\label{algo:totalc}
\end{algorithm}

\vspace{0.5cm}
%%%%%%fim

In particular, the edges incident to vertices that have already been deleted from the survival graph $G_t$ Algorithm~\ref{algo:totall} are precisely the pairs that have already been generated in the partial random pairing $P_t$. On the other hand, the edges of the survival graph $G_t$ correspond to the unpaired points in $P_t$. In particular, vertices of degree $i$ in $P_t$ correspond to vertices of degree $d-i$ in $G_t$.

The relevant variables associated with this heuristic will be $Q(t)=|D_t|$ and $Y_i(t)$, the number of vertices of degree $i$ in $P_t$, for $i \in \{0,\ldots,d\}$. In fact, since vertices of degree $d$ do not affect the remainder of the application of the algorithm, we ignore the variable $Y_d(t)$. We write $h_t=(P_0,\ldots, P_t)$ to denote the history of the process to time $t$ (that is, the results obtained in an actual application of the heuristic up to round $t$). The basic idea of the differential equation method is to keep track of the expected value of each variable at each round. If some technical conditions are met, a powerful result by Wormald, see for instance~\cite[Theorem 5.1]{wormald1}, implies that the actual values of the variables are a.a.s.\ close to their expected value \emph{for all} $t \in \{0,\ldots,T_C\}$. To achieve them, we shall prove that the following conditions are satisfied (we observe that some of them are stronger than what is actually needed for~\cite[Theorem 5.1]{wormald1}):
\begin{itemize}
\item[(i)]  There is an absolute constant $\beta=\beta(d)$ such that 
\[1 \leq Q(t+1)-Q(t)\leq 2 \textrm{ and }  \max_{0\leq {j}\leq d-1}|Y_{j}(t+1)-Y_{j}(t)|\leq\beta\]
for all $j \in \{0,\ldots,d-1\}$ and all $t \in \{0,\ldots,T_D\}$.

\item[(ii)]  There exist functions $f_0,f_1,\dots,f_{d-1},f_d:\mathbb{R}^{d+1}\rightarrow \mathbb{R}$ and $\lambda_1=\lambda_1(n)=o(1)$ such that, for all $0 \leq j\leq d-1$,
\[|\mathbb{E}[Y_{j}(t+1)-Y_{j}(t)|h_t]-f_{j}(t/n,Y_0(t)/n,\dots,Y_{d-1}(t)/n)|\leq\lambda_1(n)\]
and \[|\mathbb{E}[Q(t+1)-Q(t)|h_t]-f_d(t/n,Y_0(t)/n,\dots,Y_{d-1}(t)/n)|\leq\lambda_1(n)\]
for all $t< T_D$.

\item[(iii)]  The functions $f_j$ defined in (ii) are Lipschitz continuous in a domain
\[D\cap \{ (t,z_0,\dots,z_{d-1}):t\geq 0 \},\]
where $D$ is an open, connected and bounded set containing the point $(x_0,z_0,\ldots,z_{d-1})=(0,1,0,\ldots,0)$.
\end{itemize}
Roughly speaking, condition (i) tells us that the variables cannot vary substantially in a single round of the heuristic, condition (ii) tells us that the expected change in the variables (conditional on the history of the process) may be estimated with good precision, while condition (iii) tells us that these expected changes are described by well-behaved functions. If these conditions are met, Theorem~5.1~\cite{wormald1} establishes the following:
\begin{itemize}

\item[(a)] The system of differential equations associated with the functions $f_j$ has a unique solution $(z_0(x),\ldots,z_{d-1}(x),q(x))$ with initial conditions $z_0(0)=1$, $z_i(0)=0$ for $i > 0$ and $q(0)=0$. 

\item[(b)] The variables $Q(t)$ and $Y_i(t)$ are a.a.s.\ approximated throughout the process by the solutions of a system of differential equations involving the functions defined in (ii). 
More precisely, for $\lambda>\lambda_1$, there is an absolute constant $C$ such that, with probability $1-O\left(\frac{\beta}{\lambda}\exp\left( -\frac{n\lambda^3}{\beta^3} \right)\right)$, we have
\begin{equation}\label{conc_nick}
Y_{j}(t)/n=z_{j}(t/n)+O(\lambda), \quad Q(t)/n=q(t/n)+O(\lambda)
\end{equation}
for all $j$ and all $0\leq t\leq\sigma n$, where $\sigma=\sigma(n)$ is the supremum of all $x$ such that the solution to the system of differential equations may be extended up to distance at most $C\lambda$ from the boundary of $D$.
\end{itemize}

\section{Proving our main results}\label{sec:3}

In this section, we argue that the conditions (i), (ii) and (iii) described in the previous section are satisfied for our heuristic. In particular, we compute the functions $f_0,\ldots,f_{d-1},f_d$  that give rise to the system of differential equations mentioned in the statement of Theorems~\ref{thm_main} and~\ref{teo:52}.

To get started, fix integers $n>d\geq 3$. We shall assume that $n$ is sufficiently large. Assume that the process described in Algorithm~\ref{algo:totalc} runs for $T=T(n)$ rounds, let $h_t=(P_0,\ldots,P_T)$ denote the history of the process and let $D_t$ be the set produced up to round $t$.

We first note that $\beta=4d$ works for (i), as at most $4d-3$ pairs are exposed at each round, involving at most $4d-2$ vertices. To verify (ii), we need to compute $\mathbb{E}[X(t+1)-X(t)|h_t]$ for each relevant variable $X$. In fact, the independence property of the pairing process ensures that the conditional expectations in this process may be computed based on $P_t$, rather than on the full history $h_t$.

For $k \in \{0,\ldots,d-1\}$, let
\begin{equation}
S_k(t)=\sum_{i=k}^{d-1}(d-i)Y_i(t).
\end{equation}
Note that $S_k(t)$ denotes the number of unpaired points in vertices of degree at least $k$ in $P_t$. Also define
\begin{equation}
\Delta_{j,d}^{k+}(t)= \left\{ \begin{array}{ll}
(d-j+1)Y_{j-1}(t)-(d-j)Y_j(t) & \textrm{if $j>k$,}\\
-(d-j)Y_j(t) & \textrm{if $j=k$,}\\
0 & \textrm{if $j<k$.}\\
\end{array} \right.
\end{equation}

Clearly, the probabilities that a random point is chosen in a bucket of degree $i$ and in a bucket of degree at least $i$ in $P_t$ are equal to
$Y_i(t)/S_0(t)$ and $S_i(t)/S_{0}(t)$, respectively. More generally, the probability that a random point is chosen in a bucket of degree $i$ given that it lies in a vertex of degree at least $j$ (assuming that $S_j(t)>0$) is equal to $\delta_{i \geq j} Y_i(t)/S_j(t)$, where $\delta_A$ is equal to 1 if $A$ holds and is equal to 0 if $A$ does not hold. Of course, several points are paired in the same round of the algorithm, and the probability of each new choice will be affected by previous choices. Recall that at most $4d-3$ pairs are exposed in each step. So, as long as the number of unpaired points counted by $S_j(t)$ is at least $\xi n$ for some constant $\xi$, these probabilities can vary at most $O(1/n)$\footnote{All asymptotics in this paper is with respect of $n$.} within the same round, which turns out to be negligible in our computations. (Recall that $S_0(t) \geq d Y_0(t) \geq \varepsilon n$ throughout the algorithm.) Because of this, when computing expected changes in our variables, we will pretend that these random choices are independent. For the same reason, the probability that we produce loops or multiple edges in any particular step is negligible and will be absorbed by the error term.

From this, we deduce that the expected change $\Delta Y_j$ on the number of vertices of degree $j$ when $i$ points in a vertex are paired to points in vertices of degree at least $k$ (assuming that $S_k(t) \geq \xi n$ for some $\xi>0$) is given by
\begin{equation}\label{eq_atleastk}
\alpha_{i,k}^{(j)}(t) =\dfrac{i \Delta_{j,d}^{k+}(t)}{S_k(t)}+o(1).
\end{equation}
Next we use the simple fact that, for any event $A$ and any random variable $X$ we have 
$\mathbb{E}[X|\overline{A}]\mathbb{P}[\overline{A}]=\mathbb{E}[X]-\mathbb{E}[X|A]\mathbb{P}[A]$. Then the expected change $\Delta Y_j$ on the number of vertices of degree $j$ when $i$ points are paired, and at least one point is paired to a vertex of degree at most $k-1$, is equal to
\begin{equation}\label{eq_atmostk}
\beta_{i,k}^{(j)}(t) =\dfrac{i \dfrac{\Delta_{j,d}^{0+}(t)}{S_0(t)}-i\dfrac{\Delta_{j,d}^{k+}(t)}{S_k(t)}\left( \dfrac{S_k(t)}{S_0(t)}\right)^{i}}{1-\left( \dfrac{S_k(t)}{S_0(t)}\right)^{i}}+o(1).
\end{equation}

With this, we may compute the expected effect on $\Delta_j$ of selecting $u_t$ with the properties in each line of the algorithm. For line 6, we need to have $\deg_{P_t}(u_t)>0$. The expected change on $\Delta_j$ is 
\begin{equation}\label{effect1}
\frac{S_1(t)}{S_0(t)} \left(-\delta_{j=0}+\delta_{j=1}\right)+\sum_{i=1}^{d-1} \frac{(d-i)Y_i(t)}{S_0(t)}\left(-\delta_{j=i}+\alpha_{d-i-1,0}^{(j)}(t) \right).
\end{equation}
The expressions $\frac{S_1(t)}{S_0(t)}$ and $ \frac{(d-i)Y_i(t)}{S_0(t)}$ are the probabilities that $u_t$ has degree at least 1 and degree $i$, respectively. The first term in the sum is due to the change in the degree of $v_t$ and the terms in the sum are due to the deletion of $u_t$ and to the changes in the degrees of the neighbors of $u_t$ that have been exposed when the remaining points in $u_t$ have been paired. 

The expected effect of choosing $u_t$ that satisfies line 10 (in this case, we know that $u_t$ has degree 0 and has a neighbor of degree 0 other than $v_t$) is
\begin{equation}\label{effect2}
p_2(t) \left(-2\delta_{j=0}+\alpha_{d-1,0}^{(j)}(t)+\beta_{d-1,1}^{(j)}(t) \right)+o(1).
\end{equation}
The expression is multiplied by the probability
$$p_2(t)=\frac{dY_0(t)}{S_0(t)}\left(1- \left(\frac{S_1(t)}{S_0(t)} \right)^{d-1}\right)$$
that $u_t$ has degree 0 and at least one of its neighbors other than $v_t$ has degree 0. The first term in the sum is due to the deletion of $v_t$ and $u_t$ and the other two terms are due to the changes in the degrees of the neighbors of $v_t$ and $u_t$, respectively.

Assume that we choose $u_t$ and $v_t$ as in line 13, that is, $v_t$ and $u_t$ have degree 0 in $P_t$, $v_t$ has a neighbor $w_t$ of degree 0 other than $u_t$, but $v_t$ is the single neighbor of $u_t$ of degree 0. The expected effect on $\Delta_j$ is
\begin{equation}\label{effect3}
p_3(t) \left(-2\delta_{j=0}+\alpha_{d-1,1}^{(j)}(t)+\beta_{d-1,1}^{(j)}(t)+\alpha_{d-1,0}^{(j)}(t) -\delta_{j=1} \right)+o(1).
\end{equation}
The first term in the sum is due to the deletion of $v_t$ and $u_t$. The second term comes from degrees in the neighborhood of $u_t$, the third from the neighborhood of $v_t$ and the fourth from the neighborhood of $w_t$. Note that the third term counts turning $w_t$ from a vertex of degree 0 into a vertex of degree 1, so that the term $-\delta_{j=1}$ must be added to account for the deletion of $w_t$. Everything is multiplied by the probability 
$$p_3(t)=\frac{dY_0(t)}{S_0(t)}  \left(\frac{S_1(t)}{S_0(t)} \right)^{d-1} \left(1- \left(\frac{S_1(t)}{S_0(t)} \right)^{d-1}\right)$$ 
that $u_t$ and $v_t$ satisfy the conditions of this case.

Finally, assume that we are in line 16. So $v_t$ and $u_t$ have degree 0 in $P_t$, but their remaining neighbors have degree at least 1. The expected effect on $\Delta_j$ is
\begin{equation}\label{effect4}
p_4(t) \left(-2\delta_{j=0}+
2\alpha_{d-1,1}^{(j)}(t)+2\sum_{m=1}^{d-1}\dfrac{(d-m)Y_m(t)}{S_1(t)}\left[ -\delta_{m+1,j}+\alpha_{d-m-1,0}^{(j)}(t) \right]\right)+o(1).
\end{equation}
The first term in the sum accounts for the deletion of $v_t$ and $u_t$. The second term for their neighborhoods. The sums refer to the neighborhoods of $u_t'$ and $v_t'$ and take their degrees into account (note that the probability of having degree $i$ is conditional upon having degree at least 1). Observe that the terms $-\delta_{m+1,j}$ appear because the change in the degrees of $u_t'$ and $v_t'$ is counted when considering the neighborhoods of $u_t$ and $v_t$. This is multiplied by the probability 
$$p_4(t)=\frac{dY_0(t)}{S_0(t)}  \left(\frac{S_1(t)}{S_0(t)} \right)^{2(d-1)}$$
that $u_t$ and $v_t$ satisfy the conditions in this case.

We may now sum the equations \eqref{effect1}-\eqref{effect4} above to write the conditional expectation $\mathbb{E}[Y_j(t+1)-Y_j(t)|G_t]$ in the form
$$E[Y_j(t+1)-Y_j(t)|G_t]=f_j(t/n,Y_0(t)/n,Y_1(t)/n,\dots,Y_{d-1}(t)/n)+o(1),\quad 0\leq j\leq d-1.$$
where the functions $f_j$ are rational functions on $d+1$ variables that are well-defined whenever $S_0(t)$ is positive, which is always the case if $Y_0(t)$ is positive. Explicit expressions for the functions $f_j$ are in Appendix~\ref{Ap1}.

Next, consider the the function $Q(t)=|D(t)|$ that keeps track of the size of the total dominating set. The algorithms adds a single vertex to the total dominating set in round $t$ if $\deg_{G_t}(u_t)>0$, while two vertices are added to this set otherwise. As a consequence,
\begin{eqnarray*}
E[Q(t+1)-Q(t)|G_t] &=& \dfrac{S_1(t)}{S_0(t)}+2 \cdot \dfrac{dY_0(t)}{S_0(t)}+o(1) \\
&=& f_d(t/n,Y_0(t)/n,\dots,Y_{d-1}(t)/n)+o(1)
\end{eqnarray*}
Rewriting the quantities involved in the recurrence relations in terms of the normalized variables
\[x=t/n,\quad y_i(x)=Y_i(xn)/n, \quad q(x)=Q(xn)/n,\]
and letting $n\rightarrow\infty$, we may view this system recurrence relations as a discretization of the following system of differential equations and initial conditions:
\begin{eqnarray} \label{eq:53}
\left\{ \begin{array}{lll}
z_j'(x) &=& f_j(x,z_0,z_1,\dots,z_{d-1}) \quad \textrm{for all $0\leq j\leq d-1$}\\
q'(x) &=& f_d(x,z_0,z_1,\dots,z_{d-1})\\
z_0(0) &=& 1, z_j(0)=0 \textrm{ for $1\leq j\leq d-1$},~q(0) = 0.\\
\end{array} \right.
\end{eqnarray}
At this point, we have found the functions $f_j$ that verify (ii) with $\lambda_1=o(1)$.

Next we define a domain $D \subseteq\mathbb{R}^{d+1}$ for which (iii) is satisfied. For $\varepsilon>0$, let $D_\varepsilon$ contain all tuples $(x,z_0,z_1,\dots,z_{d-1})\in \mathbb{R}^{d+1}$ such that $-\varepsilon<x<1$, $\varepsilon<z_0<1+\varepsilon$  and $-\varepsilon/(d-j)<z_j<1+\varepsilon$ for all $1 \leq j \leq d-1$. In particular, for any $x \in D_{\varepsilon}$, $s_0(x) \geq \varepsilon$.
\begin{proposition}
For any $\varepsilon >0$ and $j \in \{0,\ldots,d\}$, the function $f_j$ is Lipschitz continuous in $D_\varepsilon$. 
\end{proposition}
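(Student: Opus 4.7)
The plan is to show that each $f_j$ is a rational function of $(z_0,\ldots,z_{d-1})$ (note that the variable $x$ does not appear explicitly in (5)--(8)) whose denominator, after algebraic simplification, is a power of the single quantity
\[
s_0(\vec{z}) \; := \; d\, z_0 + \sum_{i=1}^{d-1}(d-i)\, z_i,
\]
and then to derive Lipschitz continuity from a uniform lower bound on $s_0$ together with the boundedness of $D_\varepsilon$. The lower bound is immediate: on $D_\varepsilon$ we have $z_0>\varepsilon$ and $(d-i)z_i>-\varepsilon$ for every $1\leq i\leq d-1$, so
\[
s_0(\vec{z}) \; > \; d\varepsilon - (d-1)\varepsilon \; = \; \varepsilon.
\]

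The main step is the algebraic simplification of the four effect expressions (5)--(8). The \emph{a priori} singularities are the factor $1/s_k$ inside $\alpha_{i,k}^{(j)}$ for $k\geq 1$ and the factor $1-(s_k/s_0)^i$ in the denominator of $\beta_{i,k}^{(j)}$. Running through the four expressions, I would verify that every occurrence of $\beta_{i,k}^{(j)}$ in $f_j$ is multiplied by exactly the matching factor $1-(s_k/s_0)^i$ coming from $p_2$ or $p_3$, so the singular denominator cancels, and that every occurrence of $\alpha_{i,k}^{(j)}$ with $k\geq 1$ is multiplied by a power $(s_k/s_0)^m$ with $m\geq 1$; concretely, the exponent is $m=d-1$ inside $p_3$ and $m=2(d-1)$ inside $p_4$, so that the $1/s_1$ is absorbed and leaves behind a nonnegative power of $s_1$ in the numerator (specifically $s_1^{d-2}$ in the line-13 contribution and $s_1^{2d-3}$ in the line-16 contribution, both nonnegative for $d\geq 3$). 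After these cancellations, each of the four contributions, and hence $f_j$ itself, is a rational function whose denominator is $s_0^{N}$ for some fixed $N=N(d)$. The function $f_d$ is trivially of the same form since by construction $f_d = s_1/s_0 + 2d z_0/s_0$.

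The final step is routine. On the bounded set $D_\varepsilon$, the numerator of $f_j$, as a polynomial in the bounded variables $z_0,\ldots,z_{d-1}$, is uniformly bounded, and so are its partial derivatives. Combined with $s_0>\varepsilon$, this implies that the gradient $\nabla f_j$ is uniformly bounded on $D_\varepsilon$. Since $D_\varepsilon$ is a Cartesian product of intervals, hence convex, the mean value inequality yields Lipschitz continuity of $f_j$ with a constant depending only on $d$ and $\varepsilon$. The main obstacle is the bookkeeping in the second step, especially tracking the net exponent of $s_1$ in the double sum of (8), where the inequality $2d-3\geq 0$ (which uses $d\geq 3$) is exactly what makes the cancellation work; everything else is purely polynomial manipulation.
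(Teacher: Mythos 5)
Your proposal is correct and follows essentially the same route as the paper: the paper's proof observes that each $f_j$ is a rational function whose denominator has no zeros in the closure of $D_\varepsilon$ (with the cancellation bookkeeping you describe carried out in Appendix~\ref{Ap1}, where the only surviving singularity is $s_0=0$ and the exponents $d-2$ and $2d-3$ appear exactly as you computed), and then concludes Lipschitz continuity from continuity of the derivatives on the compact closure. Your explicit lower bound $s_0>\varepsilon$ and the mean-value argument on the convex box $D_\varepsilon$ are just a slightly more detailed rendering of the same final step.
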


\begin{proof}
Note that each $f_j$ is a rational function of the form $p_j/r_j$, where $p_j(x,z_0\dots,z_{d-1})$ and $r_j(x,z_0\dots,z_{d-1})$ are multivariate polynomials on $d+1$ variables such that $r_j$ does not contain roots in the closure of $D_\varepsilon$. In particular, the functions $f_j$ are continuous and have continuous derivatives in the closure of $D_\varepsilon$, and therefore are Lipschitz continuous in $D_\varepsilon$. 
\end{proof}

Since conditions (i), (ii) and (iii) have been verified, we may apply the differential equation method of Section~\ref{sec:2_1} to derive (a) and (b). That is, the system of differential equations associated with the functions $f_j$ has a unique solution $(z_0(x),\ldots,z_{d-1}(x),q(x))$ with initial conditions $z_0(0)=1$, $z_i(0)=0$ for $i > 0$ and $q(0)=0$ that may be extended to a value $\sigma$ arbitrarily close to the boundary of $D_\varepsilon$. Moreover, for $\lambda(n)=\max\{ n^{-1/4},2\lambda_1(n) \}$, the equations
\begin{equation*}
Y_{j}(t)/n=z_{j}(t/n)+O(\lambda), \quad Q(t)/n=q(t/n)+O(\lambda)
\end{equation*}
hold with high probability for all $t$ up to $\sigma n$.

We still need to prove that step $\sigma n$ occurs in a region where $z_0$ is small and that $z_j(x) \geq 0$ for all $x \in [0,\sigma]$. Intuitively, this means that the process ends because the number of vertices of degree 0 is getting to small. This allows us to carry out the analysis up to a point where almost all vertices of the input graph have been totally dominated. To prove this, we shall establish properties of the solutions to the system of differential equations.

The results below ensure that the solutions $z_j(x)$ and $q(x)$ lie within the interval $[0,1]$ for all values of $x \geq 0$ such that $z_0(x)>0$. This implies that the reason why the vector of solutions approaches the boundary of the closure of $D_\varepsilon$ is that $z_0(x)$ approaches 0.
\begin{proposition}\label{prop_begin}
There exists $\delta>0$ such that, for all $x\in(0,\delta]$ and $0\leq j\leq d-1$, we have $z_j(x)>0$.\label{prop:50}
\end{proposition}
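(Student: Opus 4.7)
My plan is to prove by induction on $j$ that $z_j(x)=c_jx^j+O(x^{j+1})$ with $c_j>0$, which gives $z_j(x)>0$ on some $(0,\delta_j]$; taking $\delta=\min_{0\leq j\leq d-1}\delta_j$ then finishes the proof. The case $j=0$ follows from continuity of $z_0$ and $z_0(0)=1$. For $j\geq 1$, I first note that $f_j$ is a rational function of $(x,z_0,\ldots,z_{d-1})$ whose denominator is nonzero at the initial point $(0,1,0,\ldots,0)$: writing
\[
\beta_{i,k}^{(j)}=\frac{i\,\Delta_{j,d}^{0+}\,S_0^{i-1}-i\,\Delta_{j,d}^{k+}\,S_k^{i-1}}{S_0^i-S_k^i},
\]
the denominator evaluates to $d^i>0$ at the initial point, so the solution of \eqref{eq:53} is analytic near $0$ and every derivative $z_j^{(m)}(0)$ is well-defined.

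The base case $j=1$ is a direct evaluation of $f_1(0,1,0,\ldots,0)$. The factors $S_1/S_0$, $p_3$, $p_4$ and every summand of \eqref{effect1} vanish at the initial point, so only \eqref{effect2} contributes: with $p_2=1$, $\alpha_{d-1,0}^{(1)}(0)=d-1$, and $\beta_{d-1,1}^{(1)}(0)=d-1$, one obtains $z_1'(0)=2(d-1)>0$. For the inductive step $j\geq 2$, I isolate the linear dependence of $f_j$ on $z_{j-1}$ and $z_j$ coming from $\Delta_{j,d}^{0+}=(d-j+1)z_{j-1}-(d-j)z_j$ inside the $\alpha_{d-1,0}^{(j)}$ and $\beta_{d-1,1}^{(j)}$ terms of \eqref{effect2}, and write $z_j'(x)=A(x)z_{j-1}(x)-B(x)z_j(x)+R(x)$ with $A(0),B(0)>0$ and $R(x)=O(x^j)$. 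The bound $R(x)=O(x^j)$ uses: the vanishing orders $p_3=\Theta(x^{d-1})$ and $p_4=\Theta(x^{2(d-1)})$ push \eqref{effect3} and \eqref{effect4} to orders at least $x^j$ (using $d-1\geq j$); each surviving summand of \eqref{effect1} carries an additional factor $z_i$ with $i\geq 1$; and the $z_i$ with $i\geq j+1$ enter $f_j$ only through the denominators $S_k$, contributing $O(x)$ smooth perturbations to the leading coefficients. Solving the linear ODE $z_j'=Az_{j-1}-Bz_j+R$ with $z_j(0)=0$ by variation of parameters, and using $z_{j-1}(x)=c_{j-1}x^{j-1}+O(x^j)$ from the inductive hypothesis, gives $z_j(x)=\frac{A(0)c_{j-1}}{j}x^j+O(x^{j+1})=c_jx^j+O(x^{j+1})$ with $c_j=\frac{2(d-1)(d-j+1)}{jd}c_{j-1}>0$.

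The main obstacle is carefully tracking all contributions to the remainder $R$ at the required precision. Particularly delicate are the products $z_iz_j$ that appear in \eqref{effect1} and the treatment of $z_i$ with $i\geq j+1$ (bounded a priori only as $O(x)$); a short bootstrap that tightens the bound on $z_j$ from $O(x)$ to $O(x^{j'})$ for $j'=1,2,\ldots,j$ one step at a time shows that all such terms are $O(x^j)$, justifying the leading-order analysis and the positivity of $c_j$.
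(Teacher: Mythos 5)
Your proposal is correct and follows essentially the same route as the paper: both argue by induction on $j$ that $z_j$ vanishes at $x=0$ to order exactly $j$ with positive leading coefficient, starting from $z_1'(0)=2(d-1)$ and obtaining the same recursion $c_j=\frac{2(d-1)(d-j+1)}{jd}\,c_{j-1}$, which yields positivity of each $z_j$ on a small interval $(0,\delta_j]$. The only real difference is bookkeeping: the paper observes that \emph{every} term of $f_j$ (for $j\geq 2$) carries a factor $z_{j-1}$ or $z_j$, so it can write the exact identity $z_j'=z_{j-1}u_{1,j}-z_ju_{2,j}$ and read off $z_j^{(k)}(0)$ by repeated differentiation, which removes the need for your remainder estimate $R=O(x^j)$ and the accompanying bootstrap on the vanishing order of $z_j$.
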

\begin{proof}
Since $z_0(0)=1$ and $z_0(x)$ is differentiable in $x=0$, there is $\delta_0>0$ such that $z_0(x)>0$ for all $x\in[0,\delta_0]$.

Consider the differential equations involving $z_j'$ for $1\leq j\leq d-1$. To obtain the desired result, we shall prove that the nonzero derivative of smallest order of each of $z_j$ at the point $x=0$ must be positive.
\begin{lemma}
For $1\leq j\leq d-1$, we have $z_j^{(k)}(0)=0$ for $1\leq k\leq j-1$ and $z_j^{(j)}(0)>0$.
\end{lemma}
\begin{proof}
We prove this by induction on $j\geq 1$. The base of induction follows from
$$z_1'(0)=\dfrac{2(d-1)\Delta_{1,d}^{0+}(0)}{d n}=\frac{2(d-1)\phi_{1,d}^{0+}(0)}{d}=2(d-1)>1,$$
as $p_2(0)=1$ and the terms \eqref{effect1}, \eqref{effect3} and \eqref{effect4} are zero at $x=0$.

Next, for every $j\in\{ 2,\dots,d-1 \}$, each term in the sum that produces $f_j(x,z_0,z_1,\dots,z_{d-1})$ contains a factor $z_j(x)$ or a factor $z_{j-1}(x)$, so that the differential equation may be rewritten in the form
\begin{equation}\label{eq:52}
z_j'(x)=z_{j-1}(x)u_{1,j}(x)-z_{j}(x)u_{2,j}(x),
\end{equation}
where $u_{1,j}(x)$ and $u_{2,j}(x)$ are the rational funtions that result from this rearrangement. By induction, it is easy to see that $z_j$ is of class $C^\infty$ for all points $x \in D_\varepsilon$.

For the step of induction, we differentiate $(k-1)$ times both sides of equation~\eqref{eq:52} and use the induction hypothesis $z_{j-1}(0)=z_{j-1}'(0)=z_{j-1}''(0)=\dots=z_{j-1}^{(j-2)}(0)=0$ and $z_{j-1}^{(j-1)}(0)>0$. For $1\leq k\leq j-1$, we obtain
\begin{eqnarray}\label{tapsi2}
z_j^{(k)}(0) &=& \sum_{m=0}^{k-1}\binom{k-1}{m}\left( z_{j-1}^{(m)}(0)u_{1,j}^{(k-m)}(0)-z_{j}^{(m)}(0)u_{2,j}^{(k-m)}(0)\right) \nonumber \\
&=& -\sum_{m=0}^{k-1}\binom{k-1}{m} z_{j}^{(m)}(0)u_{2,j}^{(k-m)}(0).
\end{eqnarray}
For $k=1$, this implies that $z_j'(0)=0$, since $z_j(0)=0$.  Now, as $z_j(0)=0$ and $z_j'(0)=0$, equation~\eqref{tapsi2} for $k=2$ implies that $z_j''(0)=0$. This argument may be repeated to derive $z_{j}(0)=z_{j}'(0)=z_{j}''(0)=\dots=z_{j}^{(j-1)}(0)=0$. It remains to prove that $z_j^{(j)}(0)>0$, but this follows from
\begin{eqnarray*}
z_j^{(j)}(0) &=& \sum_{m=0}^{j-1}\binom{j-1}{m}\left( z_{j-1}^{(m)}(0)u_{1,j}^{(j-m)}(0)-z_{j}^{(m)}(0)u_{2,j}^{(j-m)}(0)\right)\\
&=& z_{j-1}^{(j-1)}(0)u_{1,j}(0) = \dfrac{2(d-1)(d-j+1)z_{j-1}^{(j-1)}(0)}{d} >0.
\end{eqnarray*}
This concludes the proof.
\end{proof}

As a consequence, for $1\leq j\leq d-1$, the Taylor expansion of order $j$ of $z_j(x)$ centered at $x=0$ satisfies $$z_j(x)=\frac{z_j^{(j)}(0)}{j!}x^j+r_j(x),$$
where $r_j(x)$ is such that
$$\lim_{x\rightarrow 0^{+}}\frac{r_j(x)}{x^j}=0.$$
Therefore there is $\delta_j>0$ such that, for all $x\in(0,\delta_j]$, we have $z_j(x)>0$. Setting $\delta=\min\{ \delta_j:0\leq j\leq d-1 \}$ concludes the proof of Proposition~\ref{prop_begin}.
\end{proof}
%%%%%%%%%%fimdeslo

\begin{theorem} \label{teo:supz}
Given solutions $(z_0,z_1,\ldots,z_{d-1},q)$ to \eqref{eq:53}, let 
$$x^*=\sup\{ \theta | \textrm{$z_j(x)>0$ for all $x\in (0,\theta)$ and $j \in \{0,\ldots,d-1\}$} \}.$$ 
Then
\begin{itemize}
\item[(i)] $z_j(x)\leq 1$ for all $0\leq j\leq d-1$ and $x\in [0,x^*)$;
\item[(ii)] $x^*\in (0,1]$;
\item[(iii)] $z_0(x^*)=0$.
\end{itemize}
\end{theorem}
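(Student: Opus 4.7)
My plan is to prove (ii), then (i), then (iii), which I expect to be the main obstacle.

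For (ii), that $x^* > 0$ is immediate from Proposition~\ref{prop_begin}. To obtain $x^* \leq 1$, I would verify that $f_0 \leq -1$ on $D_\varepsilon$. Algorithmically, this captures the fact that every round removes the selected vertex $v_t$ from $Y_0$: the four disjoint cases in \eqref{effect1}--\eqref{effect4} carry probabilities $S_1/S_0, p_2, p_3, p_4$ summing to $1$; each contributes its $-\delta_{j=0}$ summand (equal to $-1, -2, -2, -2$ respectively) to $f_0$, and each remaining $\alpha^{(0)}_{\cdot,\cdot}$ or $\beta^{(0)}_{\cdot,\cdot}$ summand is easily checked to be non-positive (using $\Delta_{0,d}^{0+} = -d Y_0 \leq 0$ and $\Delta_{0,d}^{1+} = 0$). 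Consequently $z_0'(x) \leq -1$, and together with $z_0(0) = 1$ this yields $z_0(x) \leq 1 - x$, so $z_0(x) > 0$ forces $x < 1$.

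For (i), I would prove the auxiliary bound $\Sigma(x) := \sum_{j=0}^{d-1} z_j(x) \leq 1$ on $[0, x^*)$. Since $z_j(x) \geq 0$ on this interval (by the definition of $x^*$, continuity, and the initial conditions), this immediately gives $z_j \leq \Sigma \leq 1$. The bound on $\Sigma$ follows from $\Sigma(0) = 1$ combined with $\Sigma'(x) = \sum_j f_j \leq 0$. Concretely, summing \eqref{effect1}--\eqref{effect4} over $j$, the $\alpha^{(j)}$ and $\beta^{(j)}$ contributions telescope to a non-positive quantity (the expected number of vertices whose degree reaches $d$ in the round) and the $-\delta_{j=\cdot}$ contributions sum to minus the expected number of vertices deleted per round; together $\sum_j f_j \leq 0$, reflecting that $\sum_j Y_j(t)$ (the count of surviving vertices with fewer than $d$ paired points) can only decrease.

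The main obstacle is (iii). Suppose for contradiction that $z_0(x^*) > 0$. By continuity each $z_j(x^*) \geq 0$, and by the maximality of $x^*$ at least one index $j_0$ satisfies $z_{j_0}(x^*) = 0$ (else strict positivity would extend past $x^*$); necessarily $j_0 \geq 1$. Since $z_{j_0}$ drops from strictly positive values to $0$ at $x^*$, we have $z_{j_0}'(x^*) \leq 0$. For $j_0 \geq 2$, I would invoke the decomposition \eqref{eq:52}, $z_j'(x) = z_{j-1}(x) u_{1,j}(x) - z_j(x) u_{2,j}(x)$, in which $u_{1,j}$ is a rational function strictly positive on $\{z_0 > 0\}$: every term of $f_j$ proportional to $z_{j-1}$ inherits the positive factor $(d - j + 1)$ from $\Delta_{j,d}^{k+}$. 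Evaluation at $x^*$ yields $z_{j_0}'(x^*) = z_{j_0 - 1}(x^*) u_{1, j_0}(x^*) \leq 0$ with $u_{1, j_0}(x^*) > 0$ and $z_{j_0 - 1}(x^*) \geq 0$, forcing $z_{j_0 - 1}(x^*) = 0$. Iterating this backward collapse yields $z_0(x^*) = 0$, the desired contradiction. The base case $j = 1$ is handled by the observation that, with $z_1 = 0$, no vertex leaves $Y_1$ in a round, so $f_1$ equals the expected number of entries into $Y_1$, which is strictly positive whenever $z_0 > 0$ (for instance via the single contribution $p_2 \cdot \alpha^{(1)}_{d-1, 0} = \frac{d z_0}{s_0}\bigl(1 - (s_1/s_0)^{d-1}\bigr) \cdot \frac{(d-1) d z_0}{s_0} > 0$, analogous to the calculation $z_1'(0) = 2(d-1) > 0$ in the proof of Proposition~\ref{prop_begin}). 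Hence $z_1'(x^*) > 0$, contradicting $z_1'(x^*) \leq 0$ in that case as well.
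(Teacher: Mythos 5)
Your proposal is correct and follows essentially the same route as the paper: items (i) and (ii) come from the monotone decrease of $z_0$ and of the sum $\sum_j z_j$ (the paper packages both into the single fact $F'\leq -1$ for $F=\sum_j z_j$), and item (iii) is the same backward chain argument, exploiting that when $z_j(x^*)=0$ the derivative $z_j'(x^*)$ reduces to a strictly positive multiple of $z_{j-1}(x^*)$ (via the decomposition \eqref{eq:52}, equivalently the positivity of the relevant terms $b'^j_i$, $e^j_i$), which is incompatible with $z_j$ reaching $0$ from above unless $z_{j-1}(x^*)=0$. Your separate treatment of the base case $j=1$ and the verification that $u_{1,j}>0$ where $z_0>0$ are just slightly more explicit versions of checks the paper leaves implicit.
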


\begin{proof}
The first two items follow immediately from the fact that, for $F(x)=z_0(x)+z_1(x)+\dots+z_{d-1}(x)$, we have $F(0)=1$ and $F'(x) \leq -1$ for all $x \in [0,x^*)$.

To prove (iii), we claim that, if $z_j(x^*)=0$ for some $j$ such that $1 \leq j\leq d-1$, then $z_{j-1}(x^*)=0$. Iterating this argument leads to  $z_0(x^*)=0$. To establish our claim, suppose for a contradiction that $z_{j}(x^*)=0$, but $z_{j-1}(x^*)\neq 0$, i.e.\, $z_{j-1}(x^*)>0$. Looking at our expression for $f_j$, one may easily see that the terms $b'^j_i(x^*)$ and $e^j_i(x^*)$ must be strictly positive, which implies that $z'_j(x^*)>0$. Since $z_{j}(x^*)=0$, we would find $\delta>0$ such that $z_j(x^*-\delta)<0$, which contradicts our choice of $x^*$. 
\end{proof}

With these results, we may now conclude the proof of Theorem~\ref{teo:52}. Given $\delta>0$, we choose $0<\varepsilon<\delta/2$ and define
$$\hat{x}=\sup\{ \theta | \textrm{$z_0(x)\geq \varepsilon$ for all $x\in (0,\theta)$} \},$$
By the differential equation method, Algorithm~\ref{algo:totalc} asymptotically almost surely produces a random $d$-regular pseudograph that contains a total dominating set $D$ of size 
$$Q(\hat{x}n)+ Y_0(\hat{x}n)\leq q(\hat{x})n+\frac{\delta n}{2} + \varepsilon n + \leq q(x^*)n+\delta n,$$
as required. We are using that $q(\hat{x})\leq q(x^*)$.

\vspace{0.5cm}
\noindent {\bf Acknowledgments:}
C. Hoppen acknowledges the support of CNPq (Proj.~308054/2018-0), Conselho Nacional de Desenvolvimento Cient\'{i}fico e Tecnol\'{o}gico. G. Mansan thanks Coordena\c{c}\~{a}o de Aperfei\c{c}oamento de Pessoal de N\'{i}vel Superior (CAPES) for their support.

\appendix

\section{The functions $f_j$} \label{Ap1}

In Section~\ref{sec:3}, we defined the IVP~\eqref{eq:53} by looking at the expected change in the value of the variables $Y_j$ when performing a single round of the algorithm. In this section, we write the full algebraic expressions for completeness. 

By replacing the terms $\alpha_{i,k}^{(j)}$ and $\beta_{i,k}^{(j)}$ by their original expressions, the functions $f_j$ may be written as
$$f_j(x,z_0,\dots,z_{d-1})=\sum_{i=1}^{d-1}b_i(x)b'^j_i(x)+\sum_{m=2}^{4}p_m(x)e^j_m(x),$$
where the first term comes from~\eqref{effect1} and the three terms in the sum come from~\eqref{effect2}-\eqref{effect4}. Here,
\begin{eqnarray*}
b_i(x) &=& \dfrac{(d-i)z_i(x)}{s_0(x)},\\
b'^j_i(x) &=& -\delta_{j=0}+\delta_{j=1}-\delta_{j=i}+(d-i-1)\dfrac{\phi_{j,d}^{0+}(x)}{s_0(x)},
\end{eqnarray*}
\begin{eqnarray*}
p_2(x) &=& \dfrac{dz_0(x)}{s_0(x)}\left(1- \left(\dfrac{s_1(x)}{s_0(x)}\right)^{d-1}\right),\\
e_2^j(x) &=& -2\delta_{j=0}+(d-1)\dfrac{\phi_{j,d}^{0+}(x)}{s_0(x)}+h_{j,d}(x),\\
p_3(x) &=& \dfrac{dz_0(x)}{s_0(x)}\left(\dfrac{s_1(x)}{s_0(x)}\right)^{d-1}\left(1- \left(\dfrac{s_1(x)}{s_0(x)}\right)^{d-1}\right),\\
e_3^j(x) &=&-2\delta_{j=0}+(d-1)\dfrac{\phi_{j,d}^{1+}(x)}{s_1(x)}+h_{j,d}(x)-\delta_{j=1}+(d-1)\dfrac{\phi_{j,d}^{0+}(x)}{s_0(t)},\\
p_4(x) &=& \dfrac{dz_0(x)}{s_0(x)}\left(\dfrac{s_1(x)}{s_0(x)}\right)^{d-1}\left(\dfrac{s_1(x)}{s_0(x)}\right)^{d-1},\\
e_4^j(x) &=& -2\delta_{j=0}+2(d-2)\dfrac{\phi_{j,d}^{1+}(x)}{s_1(x)}+2\sum_{m=1}^{d-1}\dfrac{(d-m)z_m(x)}{s_1(x)}\left[ -\delta_{m=j}+(d-m-1)\dfrac{\phi_{j,d}^{0+}(x)}{s_0(x)} \right],
\end{eqnarray*}
\begin{eqnarray*}
h_{j,d}(x) &=& \dfrac{(d-1)\dfrac{\phi_{j,d}^{0+}(x)}{s_0(x)}-(d-1)\dfrac{\phi_{j,d}^{1+}(x)}{s_1(x)}\left( \dfrac{s_1(x)}{s_0(x)}\right)^{d-1}}{1-\left( \dfrac{s_1(x)}{s_0(x)}\right)^{d-1}},\\
\phi_{j,d}^{k+}(x) &=& \left\{ \begin{array}{ll}
(d-j+1)z_{j-1}(x)-(d-j)z_j(x) & \textrm{if $j>k$}\\
-(d-j)z_j(x) & \textrm{if $j=k$}\\
0 & \textrm{if $j<k$}\\
\end{array} \right. ,\\
s_k(x) &=& \sum_{i=k}^{d-1}(d-i)z_i(x).
\end{eqnarray*}

In order to see that the singularities of the functions $f_j$ are precisely the points such that $s_0(x)=0$, note that
\begin{eqnarray*}
\lefteqn{p_2(x)e^j_2(x)=\dfrac{dz_0(x)}{s_0(x)}\left(1- \left(\dfrac{s_1(x)}{s_0(x)}\right)^{d-1}\right) \left(-\delta_{j,0}-\delta_{j,0}+(d-1)\dfrac{\phi_{j,d}^{0+}(x)}{s_0(x)}\right)}\\
& & +\dfrac{dz_0(x)}{s_0(x)}\left((d-1)\dfrac{\phi_{j,d}^{0+}(x)}{s_0(x)}-(d-1)\dfrac{\phi_{j,d}^{1+}(x)}{s_0(x)}\left( \dfrac{s_1(x)}{s_0(x)}\right)^{d-2}\right).
\end{eqnarray*}
Moreover,
\begin{eqnarray*}
\lefteqn{p_3(x)e^j_3(x)=\dfrac{dz_0(x)}{s_0(x)}\left(\dfrac{s_1(x)}{s_0(x)}\right)^{d-1}\left( (d-1)\dfrac{\phi_{j,d}^{0+}(x)}{s_0(x)}-(d-1)\dfrac{\phi_{j,d}^{1+}(x)}{s_0(x)}\left( \dfrac{s_1(x)}{s_0(x)}\right)^{d-2} \right)} \\
& & +\dfrac{dz_0(x)}{s_0(x)}\left(\dfrac{s_1(x)}{s_0(x)}\right)^{d-2}\left(1- \left(\dfrac{s_1(x)}{s_0(x)}\right)^{d-1}\right)\left( \dfrac{\phi_{j,d}^{1+}(x)}{s_0(x)} \right)\\
& & +\dfrac{dz_0(x)}{s_0(x)}\left(\dfrac{s_1(x)}{s_0(x)}\right)^{d-1}\left(1- \left(\dfrac{s_1(x)}{s_0(x)}\right)^{d-1}\right)\left( -\delta_{j,0}-\delta_{j,0}-\delta_{j,1}+(d-1)\dfrac{\Delta_{j}^{0+}(x)}{s_0(t)} \right),
\end{eqnarray*}
and
\begin{eqnarray*}
\lefteqn{p_4(x)e^j_4(x)=\dfrac{dz_0(x)}{s_0(x)}\left(\dfrac{s_1(x)}{s_0(x)}\right)^{d-1}\left(\dfrac{s_1(x)}{s_0(x)}\right)^{d-1}\left( -2\delta_{j,0}\right)} \\
& & +\dfrac{dz_0(x)}{s_0(x)}\left(\dfrac{s_1(x)}{s_0(x)}\right)^{2d-3} \left(2\sum_{m=1}^{d-1}\dfrac{(d-m)z_m(x)}{s_0(x)}\left[ -\delta_{m,j}+(d-m-1)\dfrac{\phi_{j,d}^{0+}(x)}{s_0(x)} \right] \right),\\
& & +\dfrac{dz_0(x)}{s_0(x)}\left(\dfrac{s_1(x)}{s_0(x)}\right)^{2d-3} \left( 2(d-2)\dfrac{\phi_{j,d}^{1+}(x)}{s_0(x)} \right).
\end{eqnarray*}
\end{document}